\theoremstyle{plain}
\newtheorem{theorem}{Theorem}[section]
\newtheorem{lemma}[theorem]{Lemma}
\theoremstyle{definition}
\numberwithin{equation}{section} 
\numberwithin{subconj}{conjecture} 
\newcommand\RR{\mathbb{R}}
\newcommand\NN{\mathbb{N}}
\title{A proof of the Veselov Conjecture for segments%
\footnote{Partially supported by PGC2018-096504-B-C31
(Minis\-te\-rio de Ciencia, Innovaci\'on y Universidades),
FQM-262 and US-1254600 (Jun\-ta de Anda\-lu\-c\'ia)
and Feder Funds (European Union).}}
\author{Antonio J. Dur\'{a}n\\
     \footnotesize
        \  Departamento de An\'{a}lisis Matem\'{a}tico.
       Universidad de Sevilla \\
       \footnotesize Apdo (P. O. BOX) 1160. 41080 Sevilla. Spain.
   duran@us.es \\
          \ \ }
\date{}
\begin{document}

\sloppy

\maketitle

\begin{abstract}
In this note, we prove Veselov's conjecture on the zeros of Wronskians whose entries are Hermite polynomials when the degrees of the polynomials are consecutive positive integres.

2010 Mathematics Subject Classification: 42C05, 26C10.

Keywords: Hermite polynomials, Wronskian determinants.
\end{abstract}

\section{Introduction}

Let $(H_n)_n$ be the sequence of Hermite polynomials, i.e., the polynomial eigenfunctions of the second order differential operator
$$
-\frac{d^2}{dx^2}+2x\frac{d}{dx},
$$
with the usual normalization $H_n(x)=2^nx^n+\cdots$, $n=0,1,2,\cdots$

From now on, $F$ will denote a finite set of positive integers. We will write $F=\{ f_1,\cdots , f_k\}$, with $f_i<f_{i+1}$.
We associate to $F$ the Wronskian
\begin{equation}\label{omh}
\Omega_F(x)=\det (H_{f_i}^{(j-1)}(x))_{i,j=1}^k.
\end{equation}
It is not difficult to see that for any set $F$, the Wronskian $\Omega_F$ is a polynomial of degree
\begin{equation}\label{degWf}
w_F=\sum_{f\in F}f-\binom{k}{2}.
\end{equation}

Because of its mathematical and physical interest, Wronskian determinants whose entries are Hermite polynomials and the properties of their zeros have long been considered in the literature. For instance, Karlin and Szeg\H{o} considered such determinants
in their celebrated paper \cite{KS}, devoted to extending Tur\'an's inequality for Legendre polynomials  to Hankel determinants whose entries are ultraspherical, Laguerre, and Hermite polynomials. Karlin and Szeg\H{o} studied the real zeros of the Wronskian determinants (\ref{omh}) when $F$ is the segment $F=\{n,n+1,n+2,\cdots, n+k-1\}$.
More precisely, they proved that if $k$ is even then $\Omega _F$ has no zeros in $\RR$, and if $k$ is odd then $\Omega _F$ has exactly $n$ simple zeros in~$\RR$ (notice that the degree of $\Omega_F$ is $kn$). In the case when $F$ is a segment, the zero set of $\Omega_F$  can be also interpreted as the pole set of
some rational solutions of the fourth Painlev\'e equation and has
a regular rectangle-like structure in the complex plane, as was pointed out by Clarkson \cite{Clar}.

Consider a sequence of eigenfunctions $(\phi_n)_n$, $n\ge 0$, for a Schr\"{o}dinger operator of the form $T=-d^2/dx^2+U$, and form the Wronskian
$$
\Omega_F^T(x)=\det (\phi_{f_i}^{(j-1)}(x))_{i,j=1}^k.
$$
The Hermite case corresponds to $U(x)=x^2$. In the context of these Schr\"{o}dinger operators, Krein and Adler, independently \cite{Kr,Ad}, characterized those sets $F$ for which the Wronskian $\Omega^T_F$ does not vanish in the real line. This leads to the concept of admissible sets: a finite set of positive integer $F$ is said to be admissible if
$$
\prod_{f\in F}(x-f)\ge 0, x\in \NN.
$$
Assuming mild conditions on the function $U$, Krein and Alder proved that $F$ is admissible if and only if the Wronskian $\Omega ^T_F(x)$ does not vanish in the real line (see also \cite{duch} and \cite{GUGM}).

The zero set of the Wronskians (\ref{omh}) is also related with the Calogero-Moser problem and the log-gas in a harmonic field; see \cite{Ves}.
For an study of Wronskians of Hermite polynomials from a combinatorial view point see \cite{BDS}.

Oblomkov also considered Wronskian whose entries are Hermite polynomials to study certain class of Schr\"{o}dinger operators $T=-d^2/dx^2+U$ with rational potentials $U$ growing as $x^2$ at infinity \cite{Obl}. In this context, Alexander Veselov made the following conjecture in the 1990s (it is explicitly quoted in \cite{FHV}):

\bigskip

\noindent
\textit{Veselov's conjecture.}

\textit{For every finite set $F$ of positive integers, the zeros of $\Omega _F(z)$ (\ref{omh}) are simple, except possible for $z=0$.}

\medskip

The purpose of this note is to prove Veselov's conjecture when $F$ is a segment.  The finite set $F$ of positive integers is called a segment if its elements are consecutive integers.

\begin{theorem}\label{thp}
If $F$ is a segment then the zeros of $\Omega _F(z)$ are simple.
\end{theorem}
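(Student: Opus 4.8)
\section{A proof proposal for Theorem~\ref{thp}}

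The plan is to reduce the Wronskian of a segment to a Hankel (Tur\'an) determinant of Hermite polynomials — essentially the objects studied by Karlin and Szeg\H{o} — and then to exploit the bilinear identities such determinants satisfy. Writing $F=\{n,n+1,\dots,n+k-1\}$, the first step is the identity
\[
\Omega_F(x)=(-1)^{\binom k2}\det\bigl(H_{n+i+j-2}(x)\bigr)_{i,j=1}^{k},
\]
which I would prove from the raising relation $H_{m+1}=2xH_m-H_m'$ and the Rodrigues formula $H_m(x)e^{-x^2}=(-1)^m(e^{-x^2})^{(m)}$: together these reduce the Wronskian, after factoring $e^{-x^2}$ out of each row, to the matrix $\bigl((e^{-x^2})^{(n+i+j-2)}\bigr)_{i,j}$ of successive derivatives of $e^{-x^2}$, which is Hankel. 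Setting $\tau_b=\Omega_{\{n,\dots,n+b-1\}}$ (so $\tau_0=1$, $\tau_1=H_n$), the Hankel-in-derivatives structure yields the Toda bilinear identity
\[
\tau_{b+1}\tau_{b-1}=(\tau_b')^2-\tau_b\tau_b''+2b\,\tau_b^{\,2},
\]
while the conjugation symmetry $\Omega_{\{n,\dots,n+k-1\}}(x)=c_{n,k}\,\Omega_{\{k,\dots,k+n-1\}}(ix)$ (a segment maps to the conjugate rectangle; this multiplies the argument by $i$ and preserves multiplicities of zeros) gives a companion identity of the same leading-order shape in which it is the starting point of the segment that is shifted.

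For the base cases one uses that $\tau_1=H_n$ has only simple zeros and that, by the conjugation symmetry, $\Omega_{\{1,\dots,b\}}$ is proportional to $H_b(ix)$ and hence also has only simple zeros; extending the notation to segments starting at $0$, $\Omega_{\{0,\dots,b-1\}}$ is a nonzero constant. Now suppose $\Omega_F$ has a zero $z_0$ of multiplicity $\mu\ge 2$, and let $m(a,b)$ be the order of vanishing at $z_0$ of the segment Wronskian $\Omega_{\{a,\dots,a+b-1\}}$. The base cases give $m(a,0)=m(0,b)=0$ and $m(a,1),m(1,b)\le 1$, while comparing leading terms in the two bilinear identities forces, at every lattice point with $m(a,b)\ge 1$, the rigid relations
\[
m(a+1,b)+m(a-1,b)=2m(a,b)-2,\qquad m(a,b+1)+m(a,b-1)=2m(a,b)-2,
\]
together with accompanying multiplicative relations among the leading coefficients of the relevant $\Omega$'s at $z_0$; on the lines $a=1$ and $b=1$ those leading coefficients are explicitly known (values of $H_m$, resp.\ of $H_m(i\,\cdot\,)$, and their derivatives).

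It then remains to run a descent: starting from the point $(n,k)$ with $m(n,k)=\mu\ge 2$ one must reach the boundary lines $a=1$ or $b=1$, where $m\le 1$, and derive a contradiction. The discrete relations alone show that $m$ is strictly concave along rows and columns on its support, so on each line $m\ge 1$ occurs only on finite bumps of a prescribed parabolic shape; but this is \emph{not} enough, since the relations are consistent with, for instance, $m\equiv 2$ on a $2\times 2$ square, so no contradiction arises at the level of orders of vanishing. At this point one has to bring in the leading-coefficient relations together with a Desnanot--Jacobi identity for the Hankel determinant, such as
\[
\Omega_{\{a+1,\dots,a+b\}}^{\,2}=\Omega_{\{a,\dots,a+b-1\}}\,\Omega_{\{a+2,\dots,a+b+1\}}+\Omega_{\{a,\dots,a+b\}}\,\Omega_{\{a+2,\dots,a+b\}},
\]
and trace a loop in the $(a,b)$ lattice down to the boundary; the algebraic relation this produces, once the boundary leading coefficients are expressed through the values $H_m(z_0)$, cannot hold. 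Turning the combinatorial rigidity plus the arithmetic of Hermite polynomials into an outright contradiction is the main obstacle — everything before it is a (lengthy but essentially routine) manipulation of Wronskian and Hankel-determinant identities.
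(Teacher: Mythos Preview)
Your proposal is not a complete proof, and you say so yourself: after setting up the Toda-type and Desnanot--Jacobi relations you write that ``turning the combinatorial rigidity plus the arithmetic of Hermite polynomials into an outright contradiction is the main obstacle,'' and you do not carry that step out. The difficulty you flag is genuine: the multiplicity relations $m(a\pm1,b)+m(a\mp1,b)=2m(a,b)-2$ are consistent with nontrivial configurations (your own $2\times2$ block example), so order-of-vanishing bookkeeping alone cannot close the argument. The plan to ``trace a loop in the $(a,b)$ lattice down to the boundary'' and extract a contradiction from unspecified algebraic relations among the values $H_m(z_0)$ is not a proof; nothing prevents those boundary values from satisfying whatever relation your loop produces.

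The paper fills precisely this gap with an ingredient absent from your proposal: the second-order linear differential equation coming from the exceptional Hermite polynomials. For the segment $S(p+1,q)$ one has $\Omega_{S(p,q+1)}=H^{S(p+1,q)}_{p+pq}$, so $\Omega_{S(p,q+1)}$ satisfies a second-order linear ODE whose coefficients are rational with denominator $\Omega_{S(p+1,q)}$. The induction is then packaged as three simultaneous statements: (A.1) $\Omega_{S(p,q)}$ and $\Omega_{S(p+1,q-1)}$ share no zero; (A.2) all zeros of $\Omega_{S(p,q)}$ are simple; (A.3) $\Omega_{S(p,q)}$ and $\Omega_{S(p+1,q)}$ share no zero. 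Sylvester's identity (the source of your Desnanot--Jacobi relation) handles (A.1) and, via the involution $S(p,q)\leftrightarrow S(q,p)$ that you also invoke, (A.3). The decisive point is (A.2): once (A.1) at level $q+1$ gives $\Omega_{S(p+1,q)}(z_0)\neq0$, the ODE is regular at $z_0$, so $\Omega_{S(p,q+1)}(z_0)=\Omega_{S(p,q+1)}'(z_0)=0$ forces $\Omega_{S(p,q+1)}^{(l)}(z_0)=0$ for every $l$ by successive differentiation, contradicting $\Omega_{S(p,q+1)}\not\equiv0$. This single step replaces your entire unfinished descent; it is the missing idea.
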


Wronskian  determinants whose entries are Hermite and the other families of classical orthogonal polynomials are nowadays receiving increasing interest because of their role in the construction of the so-called exceptional  polynomials. In particular, we will use the differential properties of the exceptional Hermite polynomials in the proof of the Theorem \ref{thp}.
Exceptional orthogonal polynomials $p_n$, $p_n$ of degree $n$ and $n\in \NN\setminus X$ with $X\not =\emptyset$ finite, are complete orthogonal polynomial systems with respect to a positive measure which in addition are eigenfunctions of a second order differential operator. A family of exceptional polynomials have gaps in their degrees (the finite set $X$ above), and hence these families are not covered by the hypotheses of Bochner's classification theorem (see \cite{B}).

Exceptional polynomials extend the classical families of Hermite, Laguerre and Jacobi, and they have opened a new field of very active research, especially in
mathematical physics (see, for instance,
\cite{duch,dume,duhj,GUKM2} (where the adjective \textrm{exceptional} for this topic was introduced), \cite{GUGM,GFGM,OS0,OS3,Qu} and the references therein). For each family of exceptional polynomials there is a quantum-mechanical potential whose spectrum and
eigenfunctions can be calculated exactly. The applications of exceptional polynomials in
mathematical physics are very numerous, and include the description of mixed states \cite{GUGM2,OS0}, shape-invariant potentials \cite{Qu}, scattering amplitudes \cite{YKM}, superintegrable systems \cite{PTM,MaQu}, diffusion and stochastic processes \cite{Ho}, entropy and information theory \cite{DuRo}, solutions of the Dirac
equation \cite{ScRo} or finite-gap potentials \cite{FHV}, all of them developed in recent years.

\section{Proof of the Veselov conjecture for segments}

We will need some previous results.

The first result is the Sylvester's determinant identity (for the proof and a more general formulation of the Sylvester's identity see \cite{Gant}, p. 32).

\begin{lemma}\label{lemS}
For a square matrix $M=(m_{i,j})_{i,j=1}^k$,  and for each $1\le i, j\le k$, denote by $M_i^j$ the square matrix that results from $M$ by deleting the $i$-th row and the $j$-th column. Similarly, for $1\le i, j, p,q\le k$ denote by $M_{i,j}^{p,q}$ the square matrix that results from $M$ by deleting the $i$-th and $j$-th rows and the $p$-th and $q$-th columns.
The Sylvester's determinant identity establishes that for $i_0,i_1, j_0,j_1$ with $1\le i_0<i_1\le k$ and $1\le j_0<j_1\le k$, then
$$
\det(M) \det(M_{i_0,i_1}^{j_0,j_1}) = \det(M_{i_0}^{j_0})\det(M_{i_1}^{j_1}) - \det(M_{i_0}^{j_1}) \det(M_{i_1}^{j_0}).
$$
\end{lemma}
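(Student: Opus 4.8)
The plan is to reduce Sylvester's identity to the elementary expansion of a $2\times 2$ determinant by recognizing the four minors on the right-hand side as the four entries of a single Schur complement. Every quantity occurring in the stated equality is a polynomial in the entries $m_{i,j}$ of $M$, so it is enough to prove the identity on the set where the complementary submatrix $N:=M_{i_0,i_1}^{j_0,j_1}$ is invertible; since this set is the complement of the hypersurface $\{\det N=0\}$ it is dense, and the general case then follows by continuity. In addition, after permuting the rows of $M$ so that rows $i_0,i_1$ occupy the last two positions, and permuting the columns so that columns $j_0,j_1$ occupy the last two positions, I may assume $i_0=j_0=k-1$ and $i_1=j_1=k$.

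Under these normalizations, write $M$ in block form as $M=\begin{pmatrix}A&B\\ C&D\end{pmatrix}$, where $A=N$ is the leading $(k-2)\times(k-2)$ block (now invertible), $D$ is the $2\times 2$ lower-right corner with entries $d_{pq}$, the columns of $B$ are $b_1,b_2$, and the rows of $C$ are $c_1,c_2$. Let $S:=D-CA^{-1}B$ be the $2\times 2$ Schur complement, with entries $S_{pq}$. The standard factorization of a bordered determinant evaluates each of the four minors as $\det A$ times a single Schur entry: precisely $\det M_{k}^{k}=\det A\,S_{11}$, $\det M_{k-1}^{k-1}=\det A\,S_{22}$, $\det M_{k}^{k-1}=\det A\,S_{12}$ and $\det M_{k-1}^{k}=\det A\,S_{21}$, while at the same time $\det M=\det A\,\det S$ and $\det N=\det A$. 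Substituting these into both sides, the claimed identity becomes $(\det A)^{2}\det S=(\det A)^{2}(S_{11}S_{22}-S_{12}S_{21})$, which is nothing but the definition of a $2\times 2$ determinant.

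The genuinely delicate point is the permutation step. Moving $(i_0,i_1)$ and $(j_0,j_1)$ to the last positions multiplies $\det M$ by $\sign(\pi)\sign(\tau)$, where $\pi,\tau$ are the row and column permutations used, and the complementary minor $N$ together with each of the four bordered minors is multiplied by the signature of the corresponding \emph{restricted} permutation. The hard part will be checking that, term by term, these signatures combine into one and the same global sign on the left-hand side and on the right-hand side, so that it cancels and the normalized identity implies the general one; this bookkeeping is routine but is the only place where care is required. One could instead avoid the positional reduction altogether by routing through Jacobi's theorem on the $2\times 2$ minors of the adjugate $\operatorname{adj}(M)$ — which expresses $\det M_{i_0}^{j_0}\det M_{i_1}^{j_1}-\det M_{i_0}^{j_1}\det M_{i_1}^{j_0}$ directly in terms of $\det M$ and $\det N$ for arbitrary index positions — at the cost of first establishing Jacobi's identity.
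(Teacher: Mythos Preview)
The paper does not prove this lemma at all; it merely cites Gantmacher's \emph{The Theory of Matrices} for the proof and a more general formulation. Your proposal therefore supplies what the paper omits.

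Your Schur-complement argument is correct. The reduction to the dense set $\{\det N\neq 0\}$ via the polynomial-identity principle is standard, and the computation in the normalized case $i_0=j_0=k-1$, $i_1=j_1=k$ is accurate: each bordered minor $\det M_a^{\,b}$ equals $\det A$ times the corresponding Schur entry, and the identity collapses to the $2\times 2$ determinant formula.

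On the sign bookkeeping you flag but do not carry out: it does work cleanly. Choose $\pi,\tau$ with $\pi(i_0)=k-1$, $\pi(i_1)=k$, $\tau(j_0)=k-1$, $\tau(j_1)=k$, and use the elementary fact that deleting the element at position $p$ (with image at position $q$) from a permutation multiplies its signature by $(-1)^{p+q}$. A short computation then shows that in passing from $M$ to the permuted matrix, the left-hand side and \emph{each} of the two products on the right-hand side are all multiplied by the same factor $(-1)^{i_0+i_1+j_0+j_1}$, so the general case follows from the normalized one with no residual sign. Your alternative route through Jacobi's identity for $2\times 2$ minors of the adjugate is also valid and is in fact the line taken in Gantmacher.
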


The second result is a very interesting invariance of the Wronskians (\ref{omh}). Indeed, consider the following mapping $I$ defined in the set $\Upsilon$  formed by all finite sets of positive integers:
\begin{align}\nonumber
I&:\Upsilon \to \Upsilon \\\label{dinv}
I(F)=\{1,2,&\cdots, \max F\}\setminus \{\max F-f,f\in F\}.
\end{align}
It is not difficult to see that $I$ is an involution.

We then have
\begin{equation}\label{hip}
c_F\Omega_{F}(x)=c_{I(F)}i^{w_F}\Omega_{I(F)}(-ix)
\end{equation}
where $w_F$ is given by (\ref{degWf}) and $c_F$ is the constant (independent of $x$) defined by:
$$
c_F=\frac{1}{2^{\binom{k}{2}}\prod_{f\in F}f!}.
$$
The identity (\ref{hip}) was reported in \cite{FHV} (without proof) and also conjectured in \cite{duch}.
It was proved independently in \cite{cyd} and \cite{GUGM3}.

We also need some more results concerning the exceptional Hermite polynomials.
The exceptional Hermite family associated to the finite set $F$ of positive integers is
defined by means of the Wronskian
\begin{equation}\label{defhex}
H_n^F(x)=
  \begin{vmatrix}
    H_{n-u_F}(x)&H_{n-u_F}'(x)&\cdots &H_{n-u_F}^{(k)}(x) \\
    H_{f_1}(x)&H_{f_1}'(x)&\cdots &H_{f_1}^{(k)}(x)\\
    \vdots&\vdots&\ddots &\vdots\\
    H_{f_k}(x)&H_{f_k}'(x)&\cdots &H_{f_k}^{(k)}(x)
  \end{vmatrix}.
\end{equation}
If we define
\begin{align}\label{defuf}
u_F&=\sum_{f\in F}f-\binom{k+1}{2},\\\label{defsf}
\sigma _F&=\{u_F,u_F+1,u_F+2,\cdots \}\setminus \{u_F+f,f\in F\},
\end{align}
we have that for $n\in \sigma _F$, $H_n^F$ is a polynomial of degree $n$.  But for $n\not \in \sigma_F$ the determinant (\ref{defhex}) vanishes and then $H_n^{F}=0$.

The exceptional Hermite polynomials are eigenfunctions of the second order differential operator
\begin{equation}\label{xsodo}
D_F=-\partial ^2+h_1(x)\partial+h_0(x),
\end{equation}
where
\begin{align}\label{lum}
h_1(x)&=2\left(x+\frac{\Omega_F'(x)}{\Omega_F(x)}\right),\\\nonumber
h_0(x)&=2\left(k+u_F-x\frac{\Omega_F'(x)}{\Omega_F(x)}\right)-\frac{\Omega_F''(x)}{\Omega_F(x)},
\end{align}
and
\begin{equation}\label{odom}
D_F(H_n^F)=2nH_n^F,\quad n\in \sigma_F
\end{equation}
(see (5.11) and (5.12) of \cite{duch}).

The most interesting case appear when $F$ is admissible. Then, the exceptional Hermite polynomials
are orthogonal in the real line with respect to the positive weight
\begin{equation}\label{xhw}
w(x)=\frac{e^{-x^2}}{\Omega_F^2(x)}.
\end{equation}

We use the following notation to manage segments: if $p,q\ge 1$
\begin{equation}\label{notseg}
S(p,q)=\{p,p+1,\cdots, p+q-1\}.
\end{equation}
For $q=0$, we set $S(p,q)=\emptyset$.

Theorem \ref{thp} is the second part in the following Theorem (the proof follows a similar approach as the corresponding for Casoratian determinants whose entries are
Charlier polynomials: see Lemma 5.4 of \cite{dua}).

\begin{theorem}\label{segment}
Let $F$ be the segment $F=S(p,q)$, with $q\ge 1$. Then
\begin{enumerate}
\item[(A.1)] the polynomials $\Omega_{S(p,q)}$ and $\Omega_{S(p+1,q-1)}$ have no common zeros.
\item[(A.2)] The zeros of the polynomial $\Omega_{S(p,q)}$ are simple.
\item[(A.3)] The polynomials $\Omega_{S(p,q)}$ and $\Omega_{S(p+1,q)}$ have no common zeros.
\end{enumerate}
\end{theorem}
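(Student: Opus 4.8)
The plan is to prove the three statements (A.1), (A.2), (A.3) simultaneously by a single induction on $q$, since they are intertwined: the Sylvester identity relates the Wronskian $\Omega_{S(p,q)}$ to Wronskians with parameter $q$ decreased by one or two, so a common-zero statement at level $q$ feeds into a simplicity statement at level $q$ and vice versa. For the base cases $q=1$ and $q=2$, one checks directly: $\Omega_{S(p,1)}=H_p$ has only simple zeros (classical fact about Hermite polynomials), and for $q=2$ one computes $\Omega_{S(p,2)}$ explicitly as $2H_pH_{p+1}'-2H_{p+1}H_p'$ up to constants, using the three-term recurrence and the ladder relations $H_n'=2nH_{n-1}$ to reduce it to something whose zeros are manifestly simple; the companion set $S(p+1,q-1)$ or $S(p+1,q)$ at these small levels is either empty or a single Hermite polynomial, so (A.1) and (A.3) reduce to the statement that consecutive Hermite polynomials, and Hermite polynomials with a Wronskian of two consecutive ones, have no common zero — again classical via the recurrence.

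For the inductive step I would apply Lemma~\ref{lemS} (Sylvester) to the $k\times k$ matrix $M=(H_{f_i}^{(j-1)})$ defining $\Omega_{S(p,q)}$, deleting the first and last rows and the first and last columns. With the right bookkeeping this yields an identity of the shape
\begin{equation*}
\Omega_{S(p,q)}\,\Omega_{S(p+1,q-2)} = \pm\bigl(A\,B - C\,D\bigr),
\end{equation*}
where $A,B,C,D$ are (up to explicit nonzero constants and derivatives) Wronskians attached to the segments $S(p,q-1)$ and $S(p+1,q-1)$; the key point is that the right-hand side, when the two factors on the left share a zero $z_0$, forces a relation among the lower-level Wronskians at $z_0$. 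Combined with the inductive hypotheses (A.1)–(A.3) at parameter $q-1$, which say the relevant lower Wronskians have only simple zeros and share no zeros, this should give a contradiction unless the configuration at $z_0$ is impossible. I would then separately differentiate the Sylvester identity once and evaluate at a putative double zero $z_0$ of $\Omega_{S(p,q)}$ to pin down (A.2), using that by (A.1) at level $q$ the factor $\Omega_{S(p+1,q-2)}$ does not vanish at $z_0$.

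The exceptional Hermite machinery, equations \eqref{defhex}–\eqref{odom}, enters to handle the cases where the naive Sylvester argument degenerates (for instance when some of the lower Wronskians in the identity vanish identically because the index falls outside $\sigma_F$, or when the factor $\Omega_{S(p+1,q-2)}$ is the empty Wronskian $1$ and carries no information): there, the fact that $H_n^F$ for $n\in\sigma_F$ is a genuine degree-$n$ eigenfunction of the operator $D_F$ in \eqref{xsodo} lets me express $\Omega_{S(p+1,q)}$ (or a suitable neighbour) as essentially an exceptional Hermite polynomial for $F=S(p,q)$, and then a zero $z_0$ shared with $\Omega_{S(p,q)}$ would make $z_0$ a singular point of $D_F$ where nonetheless an eigenfunction is analytic, which is contradictory since the coefficient $h_1$ in \eqref{lum} has a simple pole with the wrong residue at any zero of $\Omega_F$ — this is the standard Wronskian/Darboux obstruction. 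Finally, the involution identity \eqref{hip} is used to reduce, if convenient, segments starting at large $p$ to ones starting at $p=1$, or to symmetrize the argument between (A.1) and (A.3).

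The main obstacle I anticipate is the combinatorics of applying Sylvester correctly: the rows of $M$ are indexed by the consecutive integers $p,\dots,p+q-1$ while the columns are derivative orders $0,\dots,q-1$, and after deleting the extreme rows one must recognize the resulting minors as Wronskians of the shifted segments — this requires carefully tracking that $\Omega_{S(p+1,q-1)}$ and its derivative both appear, and that the "mixed" minors $M_{i_0}^{j_1}$, $M_{i_1}^{j_0}$ are precisely $\partial$ of these Wronskians rather than some unrelated determinant. Getting the exact constants and signs right, and handling the boundary values of $q$ where a segment becomes empty so that several terms collapse, is where the real work lies; once the identity is in hand in the clean form above, the induction and the double-zero analysis are comparatively mechanical.
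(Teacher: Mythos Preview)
Your proposal assembles the right four ingredients---induction on $q$, Sylvester's identity, the exceptional Hermite ODE, and the involution \eqref{hip}---and the overall architecture matches the paper's. But two of the pieces are misplaced, and this leaves a genuine gap in the (A.2) step.

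First, the Sylvester move: the paper does not delete the first and last columns, but the last \emph{two} columns (together with the first and last rows) of the matrix for $\Omega_{S(p,q+1)}$. With that choice the four surviving minors are literally $\Omega_{S(p,q)}$, $\Omega_{S(p+1,q)}$, $\Omega_{S(p,q)}'$, $\Omega_{S(p+1,q)}'$ (since the derivative of a Wronskian is the same determinant with its last column replaced by the next derivative), and the inner minor is $\Omega_{S(p+1,q-1)}$. No shift via $H_n'=2nH_{n-1}$ is needed, and no $\Omega_{S(p-1,\cdot)}$ appears. This yields directly
\[
\Omega_{S(p,q+1)}\,\Omega_{S(p+1,q-1)}=\Omega_{S(p+1,q)}\,\Omega_{S(p,q)}'-\Omega_{S(p+1,q)}'\,\Omega_{S(p,q)},
\]
from which (A.1) at level $q+1$ follows in two lines from (A.2) and (A.3) at level $q$. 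Your ``first and last column'' choice instead produces an identity involving $\Omega_{S(p,q-1)}^2$ and $\Omega_{S(p+1,q-1)}\Omega_{S(p-1,q-1)}$, which is less directly usable.

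Second, and more seriously, the ODE is not a fallback for degenerate cases: it is the \emph{entire} argument for (A.2). Your plan to ``differentiate the Sylvester identity once and evaluate at a putative double zero'' does not close---one evaluation gives only a single relation among the values and first derivatives of $\Omega_{S(p,q)}$ and $\Omega_{S(p+1,q)}$ at $z_0$, with no visible contradiction from the inductive hypotheses. The paper instead observes that $\Omega_{S(p,q+1)}=H_{p(q+1)}^{S(p+1,q)}$ (your identification ``$\Omega_{S(p+1,q)}$ is an exceptional Hermite polynomial for $F=S(p,q)$'' has the roles reversed and is in fact false, since the rows do not match). Hence $\Omega_{S(p,q+1)}$ satisfies the second-order ODE \eqref{mcs}, whose coefficients are rational with denominator $\Omega_{S(p+1,q)}$. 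By the (A.1) just established at level $q+1$, $\Omega_{S(p+1,q)}(z_0)\neq 0$, so the coefficients are analytic at $z_0$; then the standard uniqueness argument for linear second-order ODEs at a regular point forces every derivative of $\Omega_{S(p,q+1)}$ to vanish at $z_0$, a contradiction. No residue analysis at a singular point of $D_F$ is involved. For (A.3) the paper does exactly what you suggest at the end: use $I(S(p,q))=S(q,p)$ and \eqref{hip} to swap $p$ and $q$, apply the same Sylvester identity to $\Omega_{S(q,p+2)}$, and translate back.
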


\begin{proof}
We prove the three properties by induction on $q$.

For $q=1$, we have $S(p,1)=\{p\}$ and $S(p+1,0)=\emptyset$. For $F=\emptyset$, we set $\Omega_F(x)=1$. Hence, property (A.1) for $q=1$ establishes that the $p$-th Hermite polynomial $H_p$ and $1$ have no common zeros, which it is trivially true.

Property (A.2) for $q=1$ establishes the well-known fact  that the zeros of $H_p(z)$ are simple.

Property (A.3) for $q=1$ establishes that $H_p$ and $H_{p+1}$ have no common zeros, which it is again a well-known property.

Assume now that properties (A.1), (A.2) and (A.3) hold for $q$.

We first prove that the polynomials $\Omega_{S(p,q+1)}$ and $\Omega_{S(p+1,q)}$ have no common zeros ((A.1) for $q+1$).

Indeed, the Sylvester's identity of Lemma \ref{lemS} for $M=\Omega_{S(p,q+1)}$ gives
\begin{align*}
\Omega_{S(p+1,q-1)}(x)\Omega_{S(p,q+1)}(x)=\Omega_{S(p+1,q)}&(x)\Omega_{S(p,q)}'(x)
\\&\qquad-\Omega_{S(p+1,q)}'(x)\Omega_{S(p,q)}(x).
\end{align*}
Assume that $\Omega_{S(p,q+1)}$ and $\Omega_{S(p+1,q)}$ have a common zero at $z_0$. Then
$$
\Omega_{S(p+1,q)}'(z_0)\Omega_{S(p,q)}(z_0)=0.
$$
Since property (A.2) holds for $q$ and $p+1$ we have $\Omega_{S(p+1,q)}'(z_0)\not =0$.
But since property (A.3) holds for $q$ then $\Omega_{S(p,q)}(z_0)\not =0$. Which it is a contradiction.

We next prove that if $\Omega_{S(p,q+1)}(z_0)=0$ then $\Omega_{S(p,q+1)}'(z_0)\not=0$. ((A.2) for $q+1$)).
Indeed, assume that $\Omega_{S(p,q+1)}'(z_0)=0$. Using what we have already proved, we can conclude that then
$\Omega_{S(p+1,q)}(z_0)\not =0$. Since for $F=S(p+1,q)$, $u_F=pq$, we trivially have
$$
\Omega_{S(p,q+1)}(x)=H_{p+pq}^{\Omega_{S(p+1,q)}}(x),
$$
where $H_n^F$ denotes the exceptional Hermite polynomial of degree $n$ defined by (\ref{defhex}).  The second order differential equation (\ref{odom}) for
$F=S(p+1,q)$ can then be written in the form
\begin{align}\label{mcs}
&-\Omega_{S(p,q+1)}''(x)+2\left(x+\frac{\Omega_{S(p+1,q)}'(x)}{\Omega_{S(p+1,q)}(x)}\right)\Omega_{S(p,q+1)}'(x)\\\nonumber
&+\left(2p(q+1)-2x\frac{\Omega_{S(p+1,q)}'(x)}{\Omega_{S(p+1,q)}(x)}-\frac{\Omega_{S(p+1,q)}''(x)}{\Omega_{S(p+1,q)}(x)}\right)\Omega_{S(p,q+1)}(x)=0.
\end{align}
Since $\Omega_{S(p+1,q)}(z_0)\not =0$, we  then conclude that $\Omega_{S(p,q+1)}''(z_0)=0$.
And by deriving successively in (\ref{mcs}) and evaluating at $x=z_0$, we find $\Omega_{S(p,q+1)}^{(l)}(z_0)=0$ for all $l$. But this is a contradiction because
$\Omega_{S(p,q+1)}(x)$ is a non-null polynomial.

We finally prove that the polynomials $\Omega_{S(p,q+1)}$ and $\Omega_{S(p+1,q+1)}$ have no zeros in common ((A.3) for $q+1$).

We will use that $I(S(p,q))=S(q,p)$ where $I$ is the involution (\ref{dinv}). The invariance property (\ref{hip}) then gives
\begin{equation}\label{inva.1}
\Omega_{S(p,q)}(x)=c_{p,q}\Omega_{S(q,p)}(-ix),
\end{equation}
for certain constant $c_{p,q}\not =0$ which does not depend on $x$.

Using again the Sylvester's identity of Lemma \ref{lemS} for $M=\Omega_{S(q,p+2)}$, we get
\begin{align*}
\Omega_{S(q,p+2)}(x)\Omega_{S(q+1,p)}(x)&=\Omega_{S(q,p+1)}(x)\Omega_{S(q+1,p+1)}'(x)
\\&\qquad-\Omega_{S(q,p+1)}'(x)\Omega_{S(q+1,p+1)}(x).
\end{align*}
Using the invariance (\ref{inva.1}), this can be rewritten in the form
\begin{align*}
c_0\Omega_{S(p+2,q)}(-ix)\Omega_{S(p,q+1)}(-ix)&=c_1\Omega_{S(p+1,q)}(-ix)\Omega_{S(p+1,q+1)}(-ix)
\\&\quad \qquad+c_2\Omega_{S(p+1,q)}(-ix)\Omega_{S(p+1,q+1)}(-ix).
\end{align*}
Assume that $\Omega_{S(p,q+1)}$ and $\Omega_{S(p+1,q+1)}$ have a common zero at $z_0$. Then
$\Omega_{S(p+1,q)}(z_0)\Omega_{S(p+1,q+1)}'(z_0)=0$. Since we have already proved property (A.1) for $q+1$ we have $\Omega_{S(p+1,q)}(z_0)\not =0$.
But we have also proved property (A.2)  for $q+1$ (and $p+1$) then $\Omega_{S(p+1,q+1)}'(z_0)\not =0$. Which it is a contradiction.

\end{proof}




\begin{thebibliography}{00}

\bibitem{Ad} V.E. Adler,
A modification of Crum's method,
Theor. Math. Phys. 101 (1994), 1381--1386.

\bibitem{B}
S. Bochner,
\"Uber Sturm--Liouvillesche polynomsysteme,
Math. Z. 29 (1929), 730--736.

\bibitem{BDS}
N. Bonneux, C. Dunning and M. Stevens, Coefficients of Wronskian Hermite polynomials, Stud. Appl. Math.  144 (2020).

\bibitem{Clar}
P.A. Clarkson,
The fourth Painlevé equation and associated special polynomials,
J. Math. Phys. 44 (2003) 5350-5374.


\bibitem{cyd}
G. Curbera and A.J. Dur\'an,
Invariance properties of Wronskian type determinants of classical
and classical discrete orthogonal polynomials,
J. Math. Anal. Appl. 474 (2019), 748--764.

\bibitem{duch}
A.J. Dur\'an,
Exceptional Charlier and Hermite polynomials.
J. Approx. Theory 182 (2014), 29--58.

\bibitem{dume}
A.J. Dur\'an,
Exceptional Meixner and Laguerre polynomials.
J. Approx. Theory. 2014; 184: 176--208.


\bibitem{duhj}
A.J. Dur\'an,
Exceptional Hahn and Jacobi polynomials.
J. Approx. Theory 214 (2017), 9--48.

\bibitem{dua}
A.J. Dur\'an,
The algebras of difference operators associated to
Krall-Charlier orthogonal polynomials.
J. Approx. Theory 234 (2018), 64--81.

\bibitem{DuRo} D. Dutta, P. Roy,
 Information entropy of conditionally exactly solvable potentials
J. Math. Phys. 52 (2011) 032104

\bibitem{FHV}
G. Felder, A. Hemery, A. Veselov,
Zeros of Wronskians of Hermite polynomials and Young diagrams,
Physica D 241 (2012) 2131-2137.

\bibitem{Gant}
F. R. Gantmacher,
The theory of matrices,
Chelsea Publishing Company, New York,  1960.

\bibitem{GFGM}
M.A. Garc\'\i a-Ferrero, D. G\'omez-Ullate  and R. Milson,
A Bochner type characterization theorem for exceptional orthogonal polynomials
J. Math. Anal. Appl. 472 (2019), 584--626.

\bibitem{GUKM2}
D. G\'omez-Ullate, N. Kamran and R. Milson,
An extension of Bochner's problem: exceptional invariant subspaces.
J. Approx. Theory. 162 (2010), 987--1006.

\bibitem{GUGM}
D. G\'omez-Ullate, Y. Grandati and R. Milson,
Rational extensions of the quantum Harmonic oscillator and exceptional Hermite polynomials.
J. Phys. A. 47 (2014), 015203, 27 pp.

\bibitem{GUGM2}
D. G\'omez-Ullate, Y. Grandati and R. Milson,
Extended Krein-Adler theorem for the translationally shape invariant potentials,
J. Math. Phys. 55 (2014), 043510.

\bibitem{GUGM3}
D. G\'omez-Ullate, Y. Grandati and R. Milson,
Durfee rectangles and pseudo-Wronskian equivalences for Hermite polynomials,
Stud. Appl. Math. 141 (2018) 596--625.


\bibitem{Ho}
C.L. Ho, Dirac(-Pauli), Fokker-Planck equations and exceptional Laguerre polynomials
Ann. Phys. 326 (2011) 797-807.

\bibitem{KS} S. Karlin, G. Szeg\H{o},
On certain determinants whose elements are orthogonal polynomials,
Journal d'Analyse Math. 8 (1961), 1-157.

\bibitem{Kr} M.G. Krein,
A continual analogue of a Christoffel formula from the theory of orthogonal polynomials,
Dokl. Akad. Nauk. SSSR  113 (1957), 970--973.

\bibitem{MaQu}
I. Marquette, C. Quesne,
 Two-step rational extensions of the harmonic oscillator: exceptional orthogonal polynomials and
ladder operators
J. Phys. A 46 (2013) 155201.

\bibitem{Obl}
A.A. Oblomkov,
Monodromy-free Schr\"{o}dinger operators with quadratically
increasing potentials,
Theor. Math. Phys. 121 (1999) 1574–1584.


\bibitem{OS0}
S. Odake and R. Sasaki,
Infinitely many shape invariant potentials and new orthogonal polynomials.
Phys. Lett. B. 2009; 679: 414--417.

\bibitem{OS3}
S. Odake and R. Sasaki,
Exactly Solvable Quantum Mechanics and Infinite Families of Multi-indexed Orthogonal Polynomials.
Phys. Lett. B. 2011; 702: 164--170.

\bibitem{PTM}
S. Post, S. Tsujimoto, L. Vinet,
 Families of superintegrable Hamiltonians constructed from exceptional polynomials
J. Phys. A 45 (2012) 405202.

\bibitem{Qu}
C. Quesne,
Exceptional orthogonal polynomials, exactly solvable potentials and supersymmetry.
J. Phys. A. 41 (2008), 392001--392007.


\bibitem{ScRo}
A. Schulze-Halberg, B. Roy,
Darboux partners of pseudoscalar Dirac potentials associated with exceptional orthogonal polynomials
Ann. Phys. 349 (2014) 159-170.


\bibitem{Ves}
A.P. Veselov,
On Stieltjes relations, Painlevé-IV hierarchy and complex monodromy,
J. Phys. A 34 (16) (2001) 3511-3519.


\bibitem{YKM}
R. Yadav, A. Khare, B.P. Mandal,
The scattering amplitude for one parameter family of shape invariant potentials related to Jacobi
polynomials,
Phys. Lett. B 732 (2013) 433-435.


\end{thebibliography}
\end{document}